\newcommand{\R}{\mathbb R}
\newcommand{\N}{\mathbb N}
\newcommand{\rf}[1]{{\rm(\ref{#1})}}
\newtheorem{theorem}{Theorem}
\newtheorem{lemma}{Lemma}
\newtheorem{proposition}{Proposition}
\newtheorem{corollary}{Corollary}
\theoremstyle{remark}
\newtheorem{remark}{Remark}
\begin{document}
\keywords{Hermite--Hadamard inequality, differentiation formulas, convex functions}
\subjclass[2010]{26A51, 26D10, 39B62}
\author{Tomasz Szostok}
\address{Institute of Mathematics, University of Silesia,Bankowa 14,
 40-007 Katowice}
\email{tszostok@math.us.edu.pl}
\title[Functional inequalities]{Functional inequalities involving  numerical differentiation formulas of  order two}
\begin{abstract}
We write expressions connected with numerical differentiation formulas of order 
$2$ in the form of Stieltjes integral, then we use Ohlin lemma and Levin-Stechkin
theorem to study inequalities connected with these expressions.
In particular, we present a new proof of the inequality 
\begin{equation}
\label{Dr}
 f\left(\frac{x+y}{2}\right)\leq\frac{1}{(y-x)^2}\int_x^y\hspace{-2mm}\int_x^yf\left(\frac{s+t}{2}\right)ds\:dt
\leq\frac{1}{y-x}\int_x^yf(t)dt
\end{equation}
satisfied by every convex function $f:\R\to\R$
and we obtain extensions of 
  \rf{Dr}. 
 Then we deal with 
  nonsymmetric inequalities of a similar form. 
\end{abstract}
\maketitle
\section{Introduction}
Writing the celebrated Hermite-Hadamard inequality  
\begin{equation}
\label{HH}
f\left(\frac{x+y}{2}\right)\leq\frac{1}{y-x}\int_{x}^yf(t)dt\leq\frac{f(x)+f(y)}{2}
\end{equation}
in the form 
\begin{equation}
\label{HHF}
f\left(\frac{x+y}{2}\right)\leq\frac{F(y)-F(x)}{y-x}\leq\frac{f(x)+f(y)}{2}
\end{equation}
we can see that \rf{HH} is, in fact, an inequality involving two very simple 
quadrature operators and a very simple differentiation formula. In papers 
\cite{SzA} and \cite{Szostok} the quadrature operators occurring in \rf{HH}
were replaced by more general ones whereas in 
 \cite{OSz} the middle term from \rf{HH} was replaced by more general formulas used 
in numerical differentiation. Thus inequalities involving expressions of the form 
$$\frac{\sum_{i=1}^na_iF(\alpha_ix+\beta_iy)}{y-x}$$
where $\sum_{i=1}^na_i=0,$ $\alpha_i+\beta_i=1$ and $F'=f$
were considered. In the current paper we deal with inequalities 
for expressions of the form 
\begin{equation}
\label{df}
\frac{\sum_{i=1}^na_i\Phi(\alpha_ix+\beta_iy)}{(y-x)^2}
\end{equation}
(where $\Phi''=f$)which are used to approximate the second order derivative of $F$ and, surprisingly,
we discover a connection between our approach and the inequality \rf{Dr}.

First we make the following simple observation.
\begin{remark}
\label{01}
Let $f,F,\Phi:[x,y]\to\R$ be such that $\Phi'=F,F'=f,$  let $n_i,m_i\in\N\cup\{0\},i=1,2,3$ $a_{i,j}\in\R,\alpha_{i,j},\beta_{i,j}\in[0,1],
\alpha_{i,j}+\beta_{i.j}=1,i=1,2,3;j=
1,\dots,n_i$
$b_{i,j}\in\R,\gamma_{i,j},\delta_{i,j}\in[0,1],\gamma_{i,j}+\delta_{i,j}=1,i=1,2,3;j=1,\dots,m_i.$
If the inequality
\begin{multline}
\label{01e}
\sum_{i=1}^{n_1}a_{1,i}f(\alpha_{1,i}x+\beta_{1,i}y)+\frac{\sum_{i=1}^{n_2}a_{2,i}F(\alpha_{2,i}x+\beta_{2,i}y)}{y-x}\\+\frac{\sum_{i=1}^{n_3}a_{3,i}\Phi(\alpha_{3,i}x+\beta_{3,i}y)}{(y-x)^2}
\leq\sum_{i=1}^{m_1}b_{1,i}f(\gamma_{1,i}x+\delta_{1,i}y)\\+\frac{\sum_{i=1}^{m_2}b_{2,i}F(\gamma_{2,i}x+\delta_{2,i}y)}{y-x}+\frac{\sum_{i=1}^{m_3}b_{3,i}\Phi(\gamma_{3,i}x+\delta_{3,i}y)}{(y-x)^2}
\end{multline}
is satisfied for for $x=0,y=1$ and for all continuous and convex functions $f:[0,1]\to\R$ then it is satisfied  
for all $x,y\in\R,x<y$ and for each continuous and convex function $f:[x,y]\to\R.$
To see this it is enough to observe that  expressions from \rf{01e} remain unchanged if we replace $f:[x,y]\to\R$ 
by $\varphi:[0,1]\to\R$ given by $\varphi(t):=f\left(x+t(y-x)\right).$
\end{remark}

The simplest expression used to approximate the second order derivative of $f$ is of the form 
$$f''\left(\frac{x+y}{2}\right)\approx\frac{f(x)-2f\left(\frac{x+y}{2}\right)+f(y)}{\left(\frac{y-x}{2}\right)^2}$$
\begin{remark}
\label{numan}
From numerical analysis it is known that 
$$f''\left(\frac{x+y}{2}\right)=\frac{f(x)-2f\left(\frac{x+y}{2}\right)+f(y)}{\left(\frac{y-x}{2}\right)^2}-
\frac{\left(\frac{y-x}{2}\right)^2}{12}f^{(4)}(\xi).$$
This means that for convex $g$ and for $G$ such that $G''=g$ we have
$$g\left(\frac{x+y}{2}\right)\leq\frac{G(x)-2G\left(\frac{x+y}{2}\right)+G(y)}{\left(\frac{y-x}{2}\right)^2}.$$
\end{remark}
In this paper we shall obtain some inequalities for convex functions which do not follow from 
numerical differentiation results.

In order to get such  results we shall use Stieltjes integral. In paper \cite{Rajba} it was observed that the classical 
Hermite-Hadamard inequality \rf{HH}
easily follows from the following Ohlin lemma
\begin{lemma} (Ohlin \cite{Ohlin})
Let $X_1,X_2$ be two random variables such that $EX_1=EX_2$  and let  $F_1,F_2$ be their distribution functions.
If $F_1,F_2$ satisfy for some $x_0$ the following inequalities
$$F_1(x)\leq F_2(x)\;\textrm{if}\;x<x_0\;\;\textrm{and}\;\;F_1(x)\geq F_2(x)\;{ if}\;x>x_0 $$
then 
\begin{equation}
\label{m}
Ef(X_1)\leq Ef(X_2)
\end{equation}
 for all continuous and convex functions $f:\R\to\R.$
\end{lemma}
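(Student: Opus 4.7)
The plan is to reduce the inequality \rf{m} for a general continuous convex $f$ to the special case of the hockey--stick functions $f_t(x):=(x-t)_+$, $t\in\R$. Any continuous convex $f:\R\to\R$ admits a representation $f(x)=ax+b+\int_\R(x-t)_+\,d\mu(t)$ for a suitable nonnegative Borel measure $\mu$ (morally the second distributional derivative of $f$) and real constants $a,b$. For the affine part $ax+b$ both sides of \rf{m} coincide thanks to the assumption $EX_1=EX_2$, so Fubini reduces the problem to proving $Ef_t(X_1)\leq Ef_t(X_2)$ for every $t\in\R$.

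For the hockey--stick $f_t$, an integration by parts (or equivalently a Fubini computation on $\{(x,s):s\geq t,\, x\geq s\}$) gives $E(X-t)_+=\int_t^\infty(1-F(x))\,dx$ whenever $X$ has a finite mean. Subtracting the two resulting expressions yields
$$Ef_t(X_2)-Ef_t(X_1)=\int_t^\infty\bigl(F_1(x)-F_2(x)\bigr)\,dx=:H(t),$$
so the whole matter comes down to showing that $H(t)\geq 0$ for every $t\in\R$.

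The monotonicity of $H$ is read off directly from the single--crossing hypothesis. Its Lebesgue derivative equals $F_2(t)-F_1(t)$, which is nonnegative on $(-\infty,x_0)$ and nonpositive on $(x_0,\infty)$; hence $H$ is nondecreasing on $(-\infty,x_0)$ and nonincreasing on $(x_0,\infty)$. The endpoint values are both zero: $H(+\infty)=0$ trivially, while $H(-\infty)=\int_\R(F_1-F_2)\,dx=EX_2-EX_1=0$ by the standard tail formula $EX=\int_0^\infty(1-F)-\int_{-\infty}^0 F$. A function that vanishes at $\pm\infty$ and is first nondecreasing then nonincreasing must be nonnegative, so $H\geq 0$, as required.

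The step I expect to be the main obstacle is the clean handling of integrability and of the integral representation of $f$: one needs the expectations $Ef(X_i)$ to be well defined, and the decomposition into hockey--sticks has to be justified. The smoothest way is to first approximate $f$ on a compact interval by piecewise linear convex functions, each of which is a genuine finite nonnegative combination of some $f_{t_k}$ plus an affine term, apply the three steps above to these approximations, and then pass to the limit via monotone or dominated convergence. Once this technicality is absorbed, the rest of the argument fits together without further difficulty.
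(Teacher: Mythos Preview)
The paper does not prove this lemma at all; it is quoted from Ohlin's 1969 paper and used as a black box. The only comment the paper makes is the remark that, for measures supported on a compact interval $[x,y]$, Ohlin's lemma is an easy consequence of the Levin--Stechkin theorem (Theorem~\ref{LS}): one checks $F_1(b)=F_2(b)$, the equality $\int_a^b F_1=\int_a^b F_2$ comes from $EX_1=EX_2$, and the single--crossing hypothesis forces $x\mapsto\int_a^x(F_2-F_1)$ to be first nondecreasing then nonincreasing with zero endpoints, hence nonnegative.

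Your argument is correct and is the standard direct proof via stop--loss (hockey--stick) functions. Note that your key function $H(t)=\int_t^\infty(F_1-F_2)$ is, after a change of sign and a shift of origin, exactly the Levin--Stechkin quantity $\int_a^x(F_2-F_1)$, and your monotonicity analysis of $H$ is precisely the verification of condition~\rf{FGi}. So your route and the paper's one--line remark amount to the same computation, packaged differently: you decompose the convex test function into extremal pieces $(x-t)_+$ and check each one, while Levin--Stechkin states necessary and sufficient conditions on $F_1,F_2$ and leaves the extremal decomposition implicit. Your version has the advantage of working directly on the whole line rather than on a compact interval, at the cost of the integrability caveats you already flagged.
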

Ohlin lemma was used also in paper \cite{SzA}.
However  in the present approach (similarly as in \cite{OSz} and \cite{Szostok}) we are going to use a
more general result from \cite{LS}, (see also \cite{NP}
Theorem 4.2.7). In this theorem we use the notations from \cite{NP}.

\begin{theorem}(Levin, Stechkin)
\label{LS}
Let $F_1,F_2:[a,b]\to\R$ be two functions with bounded variation such that $F_1(a)=F_2(a).$
Then, in order that 
$$\int_a^b f(x)dF_1(x)\leq \int_a^b f(x)dF_2(x)$$
for all continuous and convex functions $f:[a,b]\to\R$
it is necessary and sufficient that $F_1$ and $F_2$ verify the following three conditions:
\begin{equation}F_1(b)=F_2(b),\end{equation}
\begin{equation}\int_a^x F_1(t)dt\leq\int_a^x F_2(t)dt,\;x\in(a,b),
\label{FGi}
\end{equation}
and
\begin{equation}\int_a^b F_1(t)dt=\int_a^b F_2(t)dt.
\label{FGe}
\end{equation}
\end{theorem}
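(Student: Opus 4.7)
The plan is to rephrase the statement in terms of the single function $G:=F_2-F_1$, which has bounded variation and satisfies $G(a)=0$. The inequality in question becomes $\int_a^b f\,dG\ge 0$ for every continuous convex $f$, conditions (1)--(3) become $G(b)=0$, $H(x):=\int_a^x G(t)\,dt\ge 0$ on $(a,b)$, and $H(b)=0$. The key analytic tool is to apply Riemann--Stieltjes integration by parts twice so as to transfer all derivatives off the BV function $G$ and onto $f$; formally, assuming $G(a)=G(b)=0$, one gets
\begin{equation*}
\int_a^b f(x)\,dG(x)=-\int_a^b G(x)f'(x)\,dx=\int_a^b H(x)f''(x)\,dx,
\end{equation*}
after using also $H(a)=H(b)=0$ in the second integration by parts. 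This identity is the heart of the matter: it shows that the sign of the Stieltjes integral is controlled by the sign of $H$ tested against a non-negative weight $f''$.

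For sufficiency, I would first assume $f\in C^2$ and convex; then the displayed identity together with $H\ge 0$ and $f''\ge 0$ immediately yields $\int_a^b f\,dG\ge 0$. To pass to an arbitrary continuous convex $f$ on $[a,b]$, I would approximate $f$ uniformly on $[a,b]$ by smooth convex functions $f_n$ (for instance by mollification after extending $f$ convexly a little past the endpoints) and pass to the limit, using that uniform convergence is enough to pass to the limit under a Stieltjes integral against a BV measure.

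For necessity, I would test against a small but carefully chosen family of convex functions. Taking $f\equiv 1$ gives $\int dG=G(b)-G(a)=0$, hence $G(b)=0$, which is (1). Taking $f(x)=x$ and $f(x)=-x$ (both convex) and using integration by parts gives $\int_a^b x\,dG(x)=-\int_a^b G\,dx=0$, which is (3). Finally, for each $c\in(a,b)$ the tent function $f_c(x):=(c-x)_+$ is continuous and convex, and a direct integration by parts on $[a,c]$ yields
\begin{equation*}
\int_a^b f_c(x)\,dG(x)=\int_a^c(c-x)\,dG(x)=\int_a^c G(x)\,dx=H(c),
\end{equation*}
so the hypothesis forces $H(c)\ge 0$, which is (2).

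The main technical obstacle is rigour in the integration-by-parts step: $G$ is only of bounded variation, so the second integration by parts requires care (one should justify that $G$ is Lebesgue-integrable, that $H$ is absolutely continuous with $H'=G$ a.e., and that the boundary terms actually vanish). A secondary point is the approximation of a general continuous convex $f$ by smooth convex functions preserving convexity, which is standard but worth spelling out since without it the clean formula $\int f\,dG=\int Hf''\,dx$ is not literally available.
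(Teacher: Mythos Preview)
The paper does not give its own proof of this theorem: Theorem~\ref{LS} is quoted as a known result from \cite{LS} (see also \cite{NP}, Theorem~4.2.7) and is used as a black box throughout, so there is nothing in the paper to compare your argument against.

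That said, your sketch is essentially the classical proof. The reduction to $G=F_2-F_1$ and the two integrations by parts leading to $\int_a^b f\,dG=\int_a^b H f''$ (when $f\in C^2$ and the boundary terms $G(a)=G(b)=0$, $H(a)=H(b)=0$ are in force) is exactly the standard route, and your choice of test functions $\pm 1$, $\pm x$, and $(c-x)_+$ for necessity is correct and complete. The only places where a referee would push back are the ones you already flag: the first integration by parts for Riemann--Stieltjes integrals needs the usual care about which function carries the jumps (here $f$ is continuous, so there is no issue), and the passage from $C^2$ convex functions to general continuous convex $f$ by mollification should be stated carefully so that convexity is preserved and uniform convergence on $[a,b]$ holds. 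None of these is a genuine gap.
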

\begin{remark}
As it easy to see, if measures occurring in Ohlin lemma are concentrated on the interval $[x,y]$ then this lemma
is an easy consequence of Theorem \ref{LS}. However Theorem \ref{LS} is more general for two reasons: it allows 
functions $F_1,F_2$ to have more crossing points than one and functions $F_1,F_2$ do not have to be 
cumulative distribution functions. Therefore we shall use this theorem even if functions $F_1,F_2$ 
have exactly one crossing point.
\end{remark}
Let now $f:[x,y]\to\R$ be any function and let $F,\Phi:[x,y]\to\R$ be such that $F'=f$ and $\Phi'=F.$
We need to write the expression 
\begin{equation}
\label{P}
\frac{\Phi(x)-2\Phi\left(\frac{x+y}{2}\right)+\Phi(y)}{\left(\frac{y-x}{2}\right)^2}
\end{equation}
in the form $$\int_x^yfdF_1$$ for some $F_1.$
In the next proposition we show that it is possible -- here for the sake of simplicity we shall work on the interval $[0,1].$
\begin{proposition}
\label{4S}
Let $f:[0,1]\to\R$ be any function and let $\Phi:[0,1]\to\R$ be such that $\Phi''=f.$ Then we have 
$$4\left(\Phi(0)-2\Phi\left(\frac{1}{2}\right)+\Phi(1)\right)=\int_x^yfdF_1$$
where $F_1:[0,1]\to\R$ is given by
\begin{equation}
\label{F3}
F_1(t):=\left\{\begin{array}{ll}
2x^2&x\leq\frac12,\\
-2x^2+4x-1&x>\frac12.
\end{array}\right.
\end{equation}
\end{proposition}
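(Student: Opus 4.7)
The plan is to reduce the Stieltjes integral to an ordinary Riemann integral and then apply integration by parts twice, exploiting the hypothesis $\Phi'' = f$.

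First I would verify the basic regularity properties of $F_1$. A direct check shows $F_1(0)=0$, $F_1(1)=1$, and both pieces agree at $t=1/2$ with value $1/2$, so $F_1$ is continuous on $[0,1]$. It is piecewise $C^1$ with
$$F_1'(t)=\begin{cases}4t,&t<\tfrac12,\\ 4-4t,&t>\tfrac12,\end{cases}$$
so $F_1$ is absolutely continuous. Consequently the Stieltjes integral equals
$$\int_0^1 f\,dF_1=\int_0^{1/2}4t\,f(t)\,dt+\int_{1/2}^1(4-4t)f(t)\,dt.$$

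Next I would substitute $f=\Phi''$ and integrate by parts on each piece. On $[0,1/2]$, taking $u=4t$, $dv=\Phi''(t)\,dt$, one gets
$$\int_0^{1/2}4t\,\Phi''(t)\,dt=2\Phi'\!\left(\tfrac12\right)-4\bigl[\Phi(\tfrac12)-\Phi(0)\bigr].$$
On $[1/2,1]$, taking $u=4-4t$, $dv=\Phi''(t)\,dt$, one gets
$$\int_{1/2}^1(4-4t)\Phi''(t)\,dt=-2\Phi'\!\left(\tfrac12\right)+4\bigl[\Phi(1)-\Phi(\tfrac12)\bigr],$$
because the boundary factor $4-4t$ vanishes at $t=1$. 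Adding the two expressions, the values of $\Phi'(1/2)$ cancel and the identity $4(\Phi(0)-2\Phi(1/2)+\Phi(1))$ appears.

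There is no real obstacle here: the only thing to watch is the consistency of $F_1$ at the splitting point $1/2$ (which makes the $dF_1$ measure absolutely continuous with no mass at $1/2$) and the careful bookkeeping of the signs in the two integration-by-parts boundary terms, which must cancel via the continuity of $\Phi'$. Since $F_1$ is itself a CDF-type function, the same computation simultaneously shows that the expression $4(\Phi(0)-2\Phi(1/2)+\Phi(1))$ can be rewritten as $Ef(X_1)$ for a random variable $X_1$ with density $F_1'$, which is what is needed to feed this representation into the Ohlin lemma and Theorem~\ref{LS} in the subsequent arguments.
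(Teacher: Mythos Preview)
Your proof is correct and follows essentially the same route as the paper: convert the Stieltjes integral to a Riemann integral against $F_1'$, then integrate by parts twice to pass from $f$ to $\Phi$. The only cosmetic difference is that the paper names the intermediate antiderivative $F=\Phi'$ and splits the two integrations by parts into ``first to $F$, then to $\Phi$'', whereas you work directly with $\Phi'$ and $\Phi''$; the computations are otherwise identical.
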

\begin{proof}
Let $F:[0,1]\to\R$ be such that $\Phi'=F.$
Now, to prove this proposition it is enough to do the following calculations
$$\int_0^1fdF_1=\int_{0}^\frac124xf(x)dx+\int_\frac12^1(-4x+4)f(x)dx$$
$$=2F\left(\frac12\right)-0\cdot F(0)-\int_{0}^\frac12 4F(x)dx-0\cdot F(1)
-2F\left(\frac12\right)+\int_\frac12^1 4F(x)dx$$
$$=4\Phi(1)-8\Phi\left(\frac12\right)+4\Phi(1).$$
\end{proof}

\begin{remark}
Observe that if $\Phi$ and $f$ are such as in Proposition \ref{4S} then the following equality is satisfied
$$\frac{\Phi(x)-2\Phi\left(\frac{x+y}{2}\right)+\Phi(y)}{\left(\frac{y-x}{2}\right)^2}=\frac{1}{(y-x)^2}
\int_x^y\hspace{-2mm}\int_x^y
f\left(\frac{s+t}{2}\right)ds\:dt.$$
\end{remark}
After this observation it turns out that inequalities involving the expression \rf{P} were considered 
in the paper of Dragomir \cite{Dragomir} where (among others) the following inequalities were obtained
\begin{equation}
\label{D}
 f\left(\frac{x+y}{2}\right)\leq\frac{1}{(y-x)^2}\int_x^y\hspace{-2mm}\int_x^yf\left(\frac{s+t}{2}\right)ds\:dt
\leq\frac{1}{y-x}\int_x^yf(t)dt.
\end{equation}
As we already know (Remark \ref{numan}) the first one of the above inequalities may be obtained using the numerical analysis
results. 

Now the inequalities from the Dragomir's paper easily follow from Ohlin lemma but 
there are many possibilities of generalizations and modifications of inequalities 
\rf{D}. These generalizations will be discussed in the following chapters.
\section{The symmetric case}
We start with the following remark. 
\begin{remark}
\label{rma}
Let $F_*(t)=at^2+bt+c$ for some $a,b,c\in\R, a\neq 0.$
It is impossible to obtain inequalities involving
$\int_x^yfdF_*$ and any of the expressions: $\frac{1}{y-x}\int_x^yf(t)dt,f\left(\frac{x+y}{2}\right),\frac{f(x)+f(y)}{2}$
which were satisfied for all convex functions $f:[x,y]\to\R.$ Indeed, suppose for example that we have 
$$\int_x^yfdF_*\leq\frac{1}{y-x}\int_x^yf(t)dt$$
for all convex $f:[x,y]\to\R.$
Without loss of generality we may assume that $F_*(x)=0,$ then from Theorem \ref{LS} we have $F_*(y)=1$
Also from Theorem \ref{LS} we get
$$\int_x^yF_*(t)dt=\int_x^yF_0dt$$
where $F_0(t)=\frac{t-x}{y-x},t\in[x,y]$
which is impossible, since $F_*$ is either strictly convex or concave.
 \end{remark}
This remark means that in order to get some new inequalities of the Hermite-Hadamard type we have 
to integrate with respect to functions constructed with use of (at least) two quadratic functions,
as it was the case in Proposition \ref{Dr}
Now we may present the main result of this section.
\begin{theorem}
\label{mainth}
Let $x,y$ be some real numbers such that $x<y$ and let $a\in\R.$
Let $f,F,\Phi:[x,y]\to\R$ be any functions such that $F'=f$ and $\Phi'=F$ and let $T_af(x,y)$ be defined
by the formula 
$$T_af(x,y)=\left(1-\frac{a}{2}\right)\frac{F(y)-F(x)}{y-x}+2a\frac{\Phi(x)-2\Phi\left(\frac{x+y}{2}\right)+\Phi(x)}{(y-x)^2}.$$
Then the following inequalities hold for all convex functions $f:$

if $a\geq 0$ then
\begin{equation}
\label{ai}
T_af(x,y)\leq\frac{1}{y-x}\int_x^yf(t)dt,
\end{equation}   
if $a\leq 0$ then
\begin{equation}
\label{aii}
T_af(x,y)\geq\frac{1}{y-x}\int_x^yf(t)dt,
\end{equation}   
if $a\leq 2$ then
\begin{equation}
\label{aiii}
f\left(\frac{x+y}{2}\right)\leq T_af(x,y),
\end{equation}   
if $a\geq 6$ then
\begin{equation}
\label{aiv}
T_af(x,y)\leq f\left(\frac{x+y}{2}\right),
\end{equation}   
if $a\geq -6$ then
\begin{equation}
\label{av}
T_af(x,y)\leq\frac{f(x)+f(y)}{2},
\end{equation}   
Furthermore:

 if $a\in(2,6)$ then the expressions $T_af(x,y),$ $f\left(\frac{x+y}{2}\right)$ 
are not comparable in the class of convex functions 

 if $a<-6$ then
expressions $T_af(x,y),$ $\frac{f(x)+f(y)}{2}$ are not comparable in the class of convex functions.
\end{theorem}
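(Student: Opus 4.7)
By Remark~\ref{01} it suffices to work on $[0,1]$, where the strategy is to rewrite each quantity in the theorem as a Stieltjes integral of $f$ against an explicit function of bounded variation and then apply Theorem~\ref{LS}. Integrating by parts twice as in Proposition~\ref{4S}, one obtains $T_af(0,1) = \int_0^1 f\,dG_a$, where
\[
G_a(t) := \left(1-\tfrac{a}{2}\right)t + \tfrac{a}{2}F_1(t) = \begin{cases} at^2+(1-\tfrac{a}{2})t, & 0\leq t\leq \tfrac12, \\ -at^2+(1+\tfrac{3a}{2})t-\tfrac{a}{2}, & \tfrac12\leq t\leq 1, \end{cases}
\]
with $F_1$ as in Proposition~\ref{4S}. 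The three competitors are $F_0(t)=t$, the Heaviside $H_{1/2}$ with unit jump at $\tfrac12$, and $H_e$ with jumps $\tfrac12$ at each endpoint. In each comparison $G_a$ and the competitor share endpoint values $0,1$ and mean $\tfrac12$, so the parts of Theorem~\ref{LS} that do not depend on $a$ hold automatically, and the task reduces to checking the integral condition $\int_0^x G_a \leq \int_0^x F$ (or its reverse) on $(0,1)$.

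For (ai)--(aiii) the comparison has a single crossing and Ohlin's lemma applies. The identity $G_a(t)-t = at(t-\tfrac12)$ on $[0,\tfrac12]$ together with its mirror on $[\tfrac12,1]$ shows that $G_a-F_0$ changes sign only at $\tfrac12$, with orientation $\operatorname{sgn}(a)$, yielding (ai) and (aii). For (aiii), an elementary analysis of the parabola $at^2+(1-\tfrac{a}{2})t$ shows it remains nonnegative on $[0,\tfrac12]$ exactly when $a\leq 2$, which (with its counterpart on $[\tfrac12,1]$) is the one-crossing pattern of $G_a$ against $H_{1/2}$ required by Ohlin.

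The cases (aiv) and (av) are the technical heart: now $G_a$ crosses its competitor more than once and Theorem~\ref{LS} must be applied directly. For (aiv), $\psi(x):=\int_0^x (G_a-H_{1/2})\,dt$ is a piecewise cubic vanishing at $0$ and at $1$. On $[0,\tfrac12]$ one has $\psi(x)=x^2\bigl(\tfrac12-\tfrac{a}{4}+\tfrac{ax}{3}\bigr)$, and the bracket is nondecreasing in $x$ for $a\geq 0$, so $\psi\leq 0$ on $[0,\tfrac12]$ iff $\tfrac12-\tfrac{a}{12}\leq 0$, i.e.\ $a\geq 6$; on $[\tfrac12,1]$ a sign analysis of $G_a-1$ then shows that $\psi$ stays $\leq 0$ there as well. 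The analysis for (av) is parallel and produces the threshold $-6$ from the analogous scalar condition. I expect the sign-chasing for these piecewise cubics, whose breakpoints depend on $a$, to be the main technical obstacle.

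For the non-comparability assertions, two convex witnesses are constructed in each range. A direct calculation using $\Phi''=f$ and matching $\Phi,\Phi'$ at the breakpoints yields $T_af=\tfrac14-\tfrac{a}{24}$ for $f(t)=|t-\tfrac12|$, so that $T_af>f(\tfrac12)$ for $a<6$ and $T_af>\tfrac{f(0)+f(1)}{2}$ for $a<-6$. Opposite-sign witnesses come from a small spike $f_\epsilon(t):=\max(0,\epsilon-t)$ with $\epsilon>0$: expanding to leading order one finds $T_af_\epsilon - f_\epsilon(\tfrac12) = \epsilon^2\bigl(\tfrac12-\tfrac{a}{4}\bigr) + O(\epsilon^3)$, negative for $a>2$, and an analogous computation gives $T_af_\epsilon < \tfrac{f_\epsilon(0)+f_\epsilon(1)}{2}$ for every $a$. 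These provide the required counterexamples on $(2,6)$ and on $(-\infty,-6)$.
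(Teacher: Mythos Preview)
Your plan is correct and follows the paper's own proof almost exactly: the same reduction to $[0,1]$, the same representing function (your $G_a$ is the paper's $F_1$), the same three competitors $F_0$, $H_{1/2}$, $H_e$, and the same use of Ohlin's lemma for the single-crossing cases \rf{ai}--\rf{aiii} and of the Levin--Stechkin integral condition for the multi-crossing cases \rf{aiv}--\rf{av}. Your factorisation $\psi(x)=x^2\bigl(\tfrac12-\tfrac a4+\tfrac{ax}{3}\bigr)$ on $[0,\tfrac12]$ is a slightly cleaner way of reaching the threshold $a=6$ than the paper's description of $\varphi=-\psi$ via its monotonicity intervals, but the content is the same; on $[\tfrac12,1]$ the symmetry $G_a(1-t)=1-G_a(t)$ gives $\psi(1-t)=\psi(t)$, which is the ``sign analysis'' you allude to.

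The one genuine difference is the treatment of the non-comparability assertions. The paper uses the \emph{necessity} half of Theorem~\ref{LS}: once it has shown that $\varphi$ takes both signs for $a\in(2,6)$ (respectively that \rf{14} fails for $a<-6$), the Levin--Stechkin criterion immediately rules out any inequality in either direction. You instead build explicit convex witnesses, computing $T_af=\tfrac14-\tfrac{a}{24}$ for $f(t)=|t-\tfrac12|$ and expanding $T_af_\epsilon$ for a small spike $f_\epsilon$. Both arguments are valid; the paper's route is shorter because it recycles the integral $\psi$ already computed, while your route is more constructive and does not need the necessity direction of Theorem~\ref{LS}.
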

\begin{proof}
In view of Remark \ref{01} we may restrict ourselves to the case $x=0,y=1.$
Take $a\in\R,$ let $f:[0,1]:\to\R$ be any convex function and let $F,\Phi:[0,1]\to\R$ be such that $F'=f,\Phi'=F.$  
Define  $F_1:[0,1]\to\R$ by the formula
\begin{equation}
\label{F1t}
F_1(t):=\left\{\begin{array}{ll}
at^2+\left(1-\frac{a}{2}\right)t&t<\frac12,\\
-at^2+\left(1+\frac{3a}{2}\right)t-\frac{a}{2}&t\geq\frac12.
\end{array}\right.
\end{equation}
First we shall prove that $T_af(0,1)=\int_0^1fdF_1.$
Indeed, we have 
$$\int_0^1fdF_1=\int_0^\frac12fdF_1+\int_\frac12^1fdF_1=\int_0^\frac12f(t)\left(2at+1-\frac{a}{2}\right)dt$$$$+\int_\frac12^1
f(t)\left(-2at+1+\frac{3a}{2}\right)dt=F \left(\frac12\right)\left(\frac{a}{2}+1\right)-F \left(0\right)\left(1-\frac{a}{2}\right)$$$$-\int_0^\frac12F(t)2adt
+F(1)\left(1-\frac{a}{2}\right)-F \left(\frac12\right)\left(\frac{a}{2}+1\right)+\int_\frac12^1F(t)2adt$$
$$=\left(1-\frac{a}{2}\right)(F(1)-F(0))+2a\biggl(\Phi(0)-2\Phi\left(\frac12\right)+\Phi(1)\biggr)$$
Now let $F_2(t)=t,t\in[0,1].$
Then functions $F_1,F_2$ have exactly one crossing point (at $\frac12$)
and 
$$\int_0^1F_1(t)dt=\frac12=\int_0^1tdt.$$
Moreover if $a>0$ then function $F_1$ is convex on the interval $(0,\frac12)$ and 
concave on $(\frac12,1).$ 
Therefore it follows from Ohlin lemma that
for $a>0$ we have
\begin{equation}
\label{a}
\int_0^1fdF_1\leq\int_0^1fdF_2
\end{equation}
which,in view of Remark \ref{01}, yields \rf{ai} and for $a<0$ the opposite inequality is satisfied which gives \rf{aii}.

Take
\begin{equation}
\label{F3t}
F_3(t):=\left\{\begin{array}{ll}
0&t\leq\frac12,\\
1&t>\frac12.
\end{array}\right.
\end{equation}
 
It is easy to check that for $a\leq 2$ we have $F_1(t)\geq F_3(t)$ for $t\in\left[0,\frac 12\right],$
and $F_1(t)\leq F_3(t)$ for $t\in\left[\frac 12,1\right]$ and this means that from 
Ohlin lemma we get \rf{aiii}.

Suppose that $a>2.$ Then there are three crossing points of functions $F_1$ and $F_3$ $:x_0,\frac12,x_1,$
where $x_0\in(0,\frac12), x_1\in(\frac12,1)$ Function 
$$\varphi(s):=\int_0^sF_3(t)-F_1(t)dt,\;s\in[0,1]$$
is increasing on intervals $[0,x_0],[\frac12,x_1]$ and decreasing on $[x_0,\frac12]$ and on $[x_1,1].$ 
This means that $\varphi$ takes its absolute minimum at $\frac12.$
As it is easy to calculate $\varphi\left(\frac12\right)\geq 0$ if $a\geq 6$ which, in view of  Theorem \ref{LS},
gives us \rf{aiv}.

To see that for $a\in(2,6)$ expressions $T_af(x,y)$ and $f\left(\frac{x+y}{2}\right)$ are not comparable in the class 
of convex functions it is enough to observe that in this case $\varphi(x_0)>0$ and $\varphi\left(\frac12\right)<0.$

Now let 
\begin{equation}
\label{F4t}
F_4(t):=\left\{\begin{array}{ll}
0&t=0,\\
\frac12&t\in(0,1),\\
1&t=1.
\end{array}\right.
\end{equation}
Similarly as before, if $a\geq-2$ then we have  $F_1(t)\geq F_4(t)$ for $t\in\left[0,\frac 12\right]$
$F_1(t)\leq F_4(t)$ for $t\in\left[\frac 12,1\right]$ i.e. there is only one crossing point of these functions and \rf{av} is obvious.
However, for $a\in(-2,-6]$ we have
\begin{equation}
\label{14}
\int_0^\frac12 F_1(t)dt\leq\frac14=\int_0^\frac12F_4(t)dt
\end{equation}
and therefore, in view of Theorem \ref{LS} we still have 
 \rf{av}. In the case $a<-6$ inequality \rf{14} is no longer true which means that expressions
$T_af(x,y)$ and $\frac{f(x)+f(y)}{2}$ are not comparable in the class of convex functions.
\end{proof}
This theorem provides us with a full description of inequalities which may be obtained using  Stieltjes integral
with respect to a function of the form \rf{F1t}. Some of the obtained inequalities are already known. For example from 
\rf{ai} and \rf{aii} we obtain the inequality 
$$\frac{1}{(y-x)^2}\int_x^y\hspace{-2mm}\int_x^yf\left(\frac{s+t}{2}\right)ds\:dt
\leq\frac{1}{y-x}\int_x^yf(t)dt$$
whereas from \rf{aiii} for $a=2$ we get the inequality 
$$f\left(\frac{x+y}{2}\right)\leq\frac{1}{(y-x)^2}\int_x^y\hspace{-2mm}\int_x^yf\left(\frac{s+t}{2}\right)ds\:dt.$$

 However inequalities obtained  for "critical" values of $a$
i.e. $-6,6.$ are here particularly interesting. 
 In the following corollary we explicitly write these inequalities.
\begin{corollary}
\label{wn}
For every convex function $f:[x,y]\to\R$ the following inequalities are satisfied

\begin{equation}
\label{i}
3\frac{1}{(y-x)^2}\int_x^y\int_x^yf\left(\frac{s+t}{2}\right)dsdt
\leq \frac{2}{y-x}\int_x^yf(t)dt+f\left(\frac{x+y}{2}\right)
\end{equation}
and
\begin{equation}
\label{ii}
\frac{4}{y-x}\int_x^yf(t)dt\leq 3\frac{1}{(y-x)^2}\int_x^y\int_x^yf\left(\frac{s+t}{2}\right)dsdt+\frac{f(x)+f(y)}{2}
\end{equation}
 \end{corollary}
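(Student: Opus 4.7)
The plan is simply to specialise Theorem \ref{mainth} to the two critical values $a=6$ and $a=-6$ and then rewrite $T_af(x,y)$ in a form that exhibits the double integral explicitly. First I would use the identity noted in the remark following Proposition \ref{4S}, namely
$$\frac{\Phi(x)-2\Phi\!\left(\tfrac{x+y}{2}\right)+\Phi(y)}{\left(\tfrac{y-x}{2}\right)^2}=\frac{1}{(y-x)^2}\int_x^y\!\!\int_x^y f\!\left(\tfrac{s+t}{2}\right)ds\,dt,$$
to rewrite the definition of $T_af(x,y)$ in the equivalent form
$$T_af(x,y)=\left(1-\tfrac{a}{2}\right)\frac{1}{y-x}\int_x^y f(t)\,dt+\frac{a}{2}\cdot\frac{1}{(y-x)^2}\int_x^y\!\!\int_x^y f\!\left(\tfrac{s+t}{2}\right)ds\,dt.$$
(Here the factor $\tfrac14$ coming from $((y-x)/2)^2/(y-x)^2$ is absorbed into the coefficient $2a$, turning it into $a/2$.)

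Next, for \rf{i} I would set $a=6$ so that the coefficients become $(1-a/2,\,a/2)=(-2,3)$, yielding
$$T_6f(x,y)=-\frac{2}{y-x}\int_x^y f(t)\,dt+\frac{3}{(y-x)^2}\int_x^y\!\!\int_x^y f\!\left(\tfrac{s+t}{2}\right)ds\,dt.$$
Applying \rf{aiv}, namely $T_6f(x,y)\leq f\!\left(\tfrac{x+y}{2}\right)$, and moving the (negative) single-integral term to the right-hand side gives \rf{i} exactly.

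For \rf{ii} I would instead take $a=-6$, so that $(1-a/2,\,a/2)=(4,-3)$ and
$$T_{-6}f(x,y)=\frac{4}{y-x}\int_x^y f(t)\,dt-\frac{3}{(y-x)^2}\int_x^y\!\!\int_x^y f\!\left(\tfrac{s+t}{2}\right)ds\,dt.$$
Applying \rf{av}, namely $T_{-6}f(x,y)\leq \tfrac{f(x)+f(y)}{2}$, and moving the (negative) double-integral term to the right-hand side gives \rf{ii}. There is no real obstacle here: the whole content of the corollary is already packaged in the extremal values $a=\pm 6$ of Theorem \ref{mainth}, and the only bookkeeping step is to translate the $\Phi$-expression into the symmetric double integral via the remark after Proposition \ref{4S}.
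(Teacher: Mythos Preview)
Your proposal is correct and follows exactly the paper's intended route: the corollary is stated without a separate proof, the text immediately preceding it saying only that these are the inequalities obtained at the critical values $a=6$ and $a=-6$ of Theorem \ref{mainth}. Your verification of the coefficients $(1-a/2,\,a/2)=(-2,3)$ and $(4,-3)$ and the rearrangement into \rf{i} and \rf{ii} is precisely the intended bookkeeping.
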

\begin{remark}
In the paper \cite{DG} S.S. Dragomir and I. Gomm obtained the following inequality 
\begin{equation}
\label{drgo}
3\int_x^yf(t)dt\leq 2\frac{1}{(y-x)^2}\int_x^y\int_x^yf\left(\frac{s+t}{2}\right)dsdt+\frac{f(x)+f(y)}{2}.
\end{equation}
Inequality \rf{ii} from Corollary \ref{wn} is stronger than \rf{drgo}. Moreover, as it was observed in Theorem \ref{mainth}
inequalities \rf{i} and \rf{ii} cannot be improved i.e. the inequality
$$\frac{1}{y-x}\int_x^yf(t)dt\leq \lambda\frac{1}{(y-x)^2}\int_x^y\int_x^yf\left(\frac{s+t}{2}\right)dsdt+(1-\lambda)\frac{f(x)+f(y)}{2}$$
for $\lambda>\frac34$ is not satisfied by every convex function $f:[x,y]\to\R$
and the inequality 
$$\frac{1}{(y-x)^2}\int_x^y\int_x^yf\left(\frac{s+t}{2}\right)dsdt
\leq \gamma\frac{1}{y-x}\int_x^yf(t)dt+(1-\gamma)f\left(\frac{x+y}{2}\right)
$$
with $\gamma<\frac23$ is not true for all convex functions  $f:[x,y]\to\R.$ 
\end{remark} 
In Corollary \ref{wn} we obtained inequalities for the triples:
$$\frac{1}{(y-x)^2}\int_x^y\hspace{-1.5mm}\int_x^yf\left(\frac{s+t}{2}\right)dsdt,\int_x^yf(t)dt,\frac{f(x)+f(y)}{2}$$
and
$$\frac{1}{(y-x)^2}\int_x^y\hspace{-1.5mm}\int_x^yf\left(\frac{s+t}{2}\right)dsdt,\int_x^yf(t)dt,f\left(\frac{x+y}{2}\right).$$
In the next remark we present an analogous result for expressions
$$\frac{1}{(y-x)^2}\int_x^y\hspace{-1.5mm}\int_x^yf\left(\frac{s+t}{2}\right)dsdt,\frac{f(x)+f(y)}{2},f\left(\frac{x+y}{2}\right).$$
\begin{remark}
Using functions: $F_1$ defined by \rf{F3} and $F_5$ given by 
\begin{equation}
\label{F5t}
F_5(t):=\left\{\begin{array}{ll}
0&t=0,\\
\frac16&t\in\left(0,\frac12\right)\\
\frac56&t\in\left[\frac12,1\right)\\
1&t=1,
\end{array}\right.
\end{equation}
we can see that 
$$\frac16f(x)+\frac23f\left(\frac{x+y}{2}\right)+\frac16f(y)\geq\frac{1}{(y-x)^2}\int_x^y\hspace{-1.5mm}\int_x^yf\left(\frac{s+t}{2}\right)dsdt$$
for all convex functions $f:[x,y]\to\R.$

Moreover it is easy to see that 
 the above inequality cannot be strengthened which  means that the inequality 
$$af(x)+bf\left(\frac{x+y}{2}\right)+af(y)\geq\frac{1}{(y-x)^2}\int_x^y\hspace{-1.5mm}\int_x^yf\left(\frac{s+t}{2}\right)dsdt$$
where $a,b\geq 0, 2a+b=1$ is not satisfied by all convex functions $f$ if $a<\frac16.$
\end{remark}

\section{The non-symmetric case}
In this part of the paper we shall obtain inequalities for  $f(\alpha x+(1-\alpha)y)$ and for $\alpha f(x)+(1-\alpha) f(y)$ 
where $\alpha$ is not necessarily equal to $\frac12.$

Now, in contrast to the symmetric case (Remark \ref{rma}), it is possible to prove inequalities using just one quadratic function
but before we do this we shall present a nonsymmetric version of Hermite-Hadamard inequality 
involving only the primitive function of $f.$
\begin{proposition}
\label{HHal}
Let $x,y$ be some real numbers such that $x<y$ and let $\alpha\in[0,1].$ Let  $f:[x,y]\to\R,$
be a convex function, let 
 $F:[x,y]\to\R$ be such that $F'=f.$ 
If $S^1_\alpha f(x,y)$ is defined by 
$$S^1_\alpha f(x,y):=\frac{\frac{-\alpha}{1-\alpha}F(x)+\frac{2\alpha-1}{\alpha(1-\alpha)}F(\alpha x+(1-\alpha)y)+
\frac{1-\alpha}{\alpha}F(y)}{y-x}$$
then the following inequality is satisfied:
\begin{equation}
\label{a0}
f(\alpha x+(1-\alpha)y)\leq S^1_\alpha f(x,y)\leq\alpha f(x)+(1-\alpha) f(y).
\end{equation}   
\end{proposition}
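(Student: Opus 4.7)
The plan is to realise $S^{1}_\alpha f(x,y)$ as a Stieltjes integral against a piecewise linear function and then invoke Ohlin's lemma twice, once for each of the two desired inequalities. By Remark \ref{01} I may assume $x=0$, $y=1$, so that the interior node $\alpha x+(1-\alpha)y$ becomes $c:=1-\alpha$. I would introduce the continuous, piecewise linear $F_1:[0,1]\to\R$ with $F_1(0)=0$, $F_1(1)=1$, slope $\frac{\alpha}{1-\alpha}$ on $[0,c]$ and slope $\frac{1-\alpha}{\alpha}$ on $[c,1]$; in particular $F_1(c)=\alpha$. A one-line split
$$\int_0^1 f\,dF_1=\frac{\alpha}{1-\alpha}\bigl(F(c)-F(0)\bigr)+\frac{1-\alpha}{\alpha}\bigl(F(1)-F(c)\bigr)$$
and the identity $\frac{\alpha}{1-\alpha}-\frac{1-\alpha}{\alpha}=\frac{2\alpha-1}{\alpha(1-\alpha)}$ give exactly $S^{1}_\alpha f(0,1)$. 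A short direct computation also shows $\int_0^1 t\,dF_1(t)=1-\alpha$, so $F_1$ has mean $c$.

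For the left inequality I would take $F_2$ to be the Heaviside-type CDF with $F_2(t)=0$ for $t<c$ and $F_2(t)=1$ for $t\ge c$, concentrated at $c$, so that $\int_0^1 f\,dF_2=f(c)=f(\alpha x+(1-\alpha)y)$. Then $F_1$ and $F_2$ agree at $0$ and $1$, share the common mean $c$, and satisfy $F_2\le F_1$ on $[0,c)$ (since $F_1\ge 0$) and $F_2\ge F_1$ on $(c,1]$ (since $F_1\le 1$), with $c$ the unique crossing point. Ohlin's lemma therefore yields $f(c)\le S^{1}_\alpha f(0,1)$.

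For the right inequality I would use the two-jump function $F_3$ defined (in the spirit of \rf{F4t}) by $F_3(0)=0$, $F_3(t)=\alpha$ for $t\in(0,1)$, $F_3(1)=1$, whose jumps $\alpha$ at $0$ and $1-\alpha$ at $1$ give $\int_0^1 f\,dF_3=\alpha f(0)+(1-\alpha)f(1)$ and mean $1-\alpha=c$. On $[0,c]$ one has $F_1(t)=\frac{\alpha}{1-\alpha}t\le\alpha=F_3(t)$, while on $[c,1]$ the linear piece of $F_1$ rises from $\alpha$ to $1$, so $F_1\ge F_3$; thus the two functions cross exactly once (at $c$) and Ohlin's lemma gives $S^{1}_\alpha f(0,1)\le\alpha f(0)+(1-\alpha)f(1)$. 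The only step that demands care is the boundary convention for the Stieltjes integrals when $F_3$ carries jumps at $0$ and $1$, and this is handled exactly as in the proof of Theorem \ref{mainth}; everything else is routine arithmetic.
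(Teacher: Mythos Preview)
Your proof is correct and follows essentially the same route as the paper: the paper's functions $F_6,F_7,F_8$ are precisely your $F_2,F_3,F_1$, and the paper likewise checks that all three have the same integral and that each relevant pair crosses exactly once before invoking Theorem~\ref{LS} (equivalently, Ohlin's lemma in the single-crossing case). The only cosmetic difference is that the paper records $\int_0^1 F_i(t)\,dt=\alpha$ rather than the mean $\int_0^1 t\,dF_i(t)=1-\alpha$, which are equivalent via integration by parts.
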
 
\begin{proof}
As usually, the proof will be done on the interval $[0,1].$ 
Define functions $F_6,F_7,F_8:[0,1]\to\R$ by the following formulas:
\begin{equation}
\label{F6t}
F_6(t):=\left\{\begin{array}{ll}
0&t\leq 1-\alpha,\\
1&t>1-\alpha,
\end{array}\right.,
\end{equation}
\begin{equation}
\label{F7t}
F_7(t):=\left\{\begin{array}{ll}
0&t=0,\\
\alpha&t\in(0,1)\\
1&t=1,
\end{array}\right.,
\end{equation}
and 
\begin{equation}
\label{F8t}
F_8(t):=\left\{\begin{array}{ll}
\frac{\alpha}{1-\alpha}t&t\in[0,1-\alpha),\\
\frac{1-\alpha}{\alpha}t+\frac{2\alpha-1}{\alpha}&t\in[1-\alpha,1].
\end{array}\right.,
\end{equation}
We have:
$$\int_0^1F_6(t)dt=\int_0^1F_7(t)dt=\int_0^1F_8(t)dt=\alpha,$$
$$\int_0^1fdF_6=f(1-\alpha)$$
$$\int_0^1fdF_7=\alpha f(0)+(1-\alpha)f(1)$$
$$\int_0^1fdF_8=S^1_\alpha f(0,1).$$
Moreover both of the pairs $(F_6,F_8)$ and $(F_8,F_7)$ has only one crossing point.
Thus it suffices to use Theorem \ref{LS} to obtain inequalities \rf{a0}.
\end{proof}
 
\begin{theorem}
\label{NS}
Let $x,y$ be some real numbers such that $x<y$ and let $\alpha\in[0,1].$ Let  $f:[x,y]\to\R,$
be a convex functions, let 
 $F$ be such that $F'=f$ and let $\Phi$ satisfy $\Phi'=F.$  
If $S^2_\alpha f(x,y)$ is defined by 
$$S^2_\alpha f(x,y):=\frac{(4-6\alpha)F(y)+(2-6\alpha)F(x)}{y-x}-\frac{(6-12\alpha)(\Phi(y)-\Phi(x))}{(y-x)^2}$$
then the following conditions hold true:
\begin{equation}
\label{al}
S^2_\alpha f(x,y)\leq\alpha f(x)+(1-\alpha) f(y),
\end{equation}   

if $\alpha\in\left[\frac13,\frac23\right]$ then
\begin{equation}
\label{al1}
S^2_\alpha f(x,y)\geq f(\alpha x+(1-\alpha)y),
\end{equation}    
if $\alpha\in[0,1]\setminus\left[\frac13,\frac23\right]$ then expressions $S^2_\alpha f(x,y)$ and $f(\alpha x+(1-\alpha)y)$ are incomparable in 
the class of convex functions,

if $\alpha\in\left(0,\frac13\right]\cup\left[\frac23,1\right)$ then
\begin{equation}
\label{S1S2}
S^2_\alpha f(x,y)\leq S^1_\alpha f(x,y)
\end{equation}    
and if $\alpha\in\left(\frac13,\frac12\right)\cup\left(\frac12,\frac23\right)$ then $S^1_\alpha f(x,y)$ and $S^2_\alpha f(x,y)$ are incomparable in 
the class of convex functions.
\end{theorem}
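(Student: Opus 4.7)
By Remark \ref{01}, it suffices to prove the statements for $x=0$, $y=1$. Paralleling the proofs of Proposition \ref{HHal} and Proposition \ref{4S}, my plan is to represent $S^2_\alpha f(0,1)$ as a Stieltjes integral $\int_0^1 f\,dF_9$ with
$$F_9(t):=(3-6\alpha)t^2+(6\alpha-2)t,\qquad t\in[0,1],$$
and then compare $F_9$ with the auxiliary functions $F_6$, $F_7$, $F_8$ from Proposition \ref{HHal}. A single integration by parts gives $\int_0^1 f\,dF_9=(4-6\alpha)F(1)+(2-6\alpha)F(0)-(6-12\alpha)(\Phi(1)-\Phi(0))=S^2_\alpha f(0,1)$, and one checks that $F_9(0)=0$, $F_9(1)=1$, $\int_0^1 F_9(t)\,dt=\alpha$. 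Computing $F_9'(0)=6\alpha-2$ and $F_9'(1)=4-6\alpha$ shows that $F_9$ is increasing throughout $[0,1]$ precisely when $\alpha\in[\tfrac13,\tfrac23]$, which is the structural source of the distinguished range appearing in the theorem.

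For \rf{al} I compare $F_9$ with $F_7$. The difference $F_7-F_9$ is a quadratic on $(0,1)$ equal to $\alpha$ at $0^+$ and to $\alpha-1$ at $1^-$; inspecting the three subcases for the sign of the leading coefficient $6\alpha-3$ shows it has a unique sign change in $(0,1)$, from $+$ to $-$. Hence $\psi(x):=\int_0^x(F_7-F_9)\,dt$ vanishes at $0$ and $1$ and is unimodal, therefore nonnegative, and Theorem \ref{LS} yields \rf{al}. For \rf{al1} with $\alpha\in[\tfrac13,\tfrac23]$ the function $F_9$ is continuous and increasing from $0$ to $1$, while $F_6$ is the step function with a unit jump at $1-\alpha$; since $F_6\le F_9$ on $(0,1-\alpha)$ and $F_6\ge F_9$ on $(1-\alpha,1)$, Ohlin's lemma gives $f(1-\alpha)=\int f\,dF_6\le\int f\,dF_9=S^2_\alpha f(0,1)$. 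For $\alpha\notin[\tfrac13,\tfrac23]$ the graph of $F_9$ leaves the strip $[0,1]$ near one of the endpoints; a short computation then shows that $\chi(x):=\int_0^x(F_9-F_6)\,dt$ changes sign on $[0,1]$, which by the necessity part of Theorem \ref{LS} rules out either of the inequalities $S^2_\alpha f(0,1)\le f(1-\alpha)$ or $S^2_\alpha f(0,1)\ge f(1-\alpha)$ holding for all convex $f$. Explicit witnesses showing both orderings can be realised by suitable convex test functions such as $f(t)=t^2$ and piecewise linear $f(t)=\max(0,c-t)$ with $c$ chosen inside the bad subinterval.

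For \rf{S1S2} I carry out the analogous comparison of $F_9$ with the piecewise linear $F_8$; both have matching endpoint values and matching total integral $\alpha$. On each of the two pieces $[0,1-\alpha]$ and $[1-\alpha,1]$ the sign of $F_8-F_9$ reduces to an elementary quadratic analysis, and computing $\int_0^x(F_8-F_9)\,dt$ shows that it is nonnegative on $[0,1]$ exactly for $\alpha\in(0,\tfrac13]\cup[\tfrac23,1)$, in which range Theorem \ref{LS} delivers \rf{S1S2}. For $\alpha\in(\tfrac13,\tfrac12)\cup(\tfrac12,\tfrac23)$ this primitive changes sign in $[0,1]$, and one again exhibits piecewise linear convex functions whose kinks are placed inside the regions of opposite sign to produce the two opposite strict inequalities, giving incomparability (note that at $\alpha=\tfrac12$ one has $F_9=F_8=\mathrm{id}$, which is why $\tfrac12$ is excluded from both the inequality and the incomparability claim). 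The main obstacle is precisely this case analysis: locating the zeros of the quadratics $F_7-F_9$, $F_9-F_6$ and $F_8-F_9$ as functions of $\alpha$ and tracking the sign of their primitives. The thresholds $\alpha=\tfrac13,\tfrac12,\tfrac23$ arise naturally from this bookkeeping as the values at which the relevant primitive loses its definite sign.
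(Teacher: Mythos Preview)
Your approach is essentially the same as the paper's: you introduce the identical function $F_9(t)=(3-6\alpha)t^2+(6\alpha-2)t$, verify $\int_0^1 f\,dF_9=S^2_\alpha f(0,1)$ and $\int_0^1 F_9=\alpha$, and then compare $F_9$ with the same auxiliary functions $F_6,F_7,F_8$ from Proposition~\ref{HHal} via Ohlin/Levin--Stechkin. The only notable difference is in how you establish the incomparability claims: the paper observes that in the relevant ranges $F_9$ and $F_6$ (respectively $F_9$ and $F_8$) have exactly two crossing points and then invokes Lemma~2 from \cite{OSz} (two crossing points with matching first moments force incomparability), whereas you argue directly that the primitive $\int_0^x(F_9-F_6)\,dt$ (respectively $\int_0^x(F_8-F_9)\,dt$) changes sign and appeal to the necessity half of Theorem~\ref{LS}, supplementing this with explicit convex witnesses. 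Both routes are correct; yours is more self-contained, while the paper's citation packages the recurring two-crossing-point argument once and for all.
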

\begin{proof}
Take 
$$F_9(t)=(3-6\alpha)t^2+(6\alpha-2)t,\;t\in[0,1]$$
and let $F_6,F_7,F_8$ be defined so as in Proposition \ref{HHal}.
Then we have 
\begin{multline}
\int_0^1fdF_1=\int_0^1\bigl((6-12\alpha)t+6\alpha-2\bigr)f(t)dt\\
=F(1)(6-12\alpha)-\int_0^1(6-12\alpha)F(t)dt+(6\alpha-2)\bigl(F(1)-F(0)\bigr)\\
=(4-6\alpha)F(1)+(2-6\alpha)F(0)-(6-12\alpha)\bigl(\Phi(1)-\Phi(0)\bigr)
\end{multline}
and 
$$\int_0^1F_9(t)dt=\int_0^1F_8(t)dt=\int_0^1F_7(t)dt=\int_0^1F_6(t)dt.$$
It is easy to see that functions $F_9,F_7$ have exactly one crossing point thus we have
$$\int_0^1fdF_9\leq\int_0^1fdF_7$$
which gives us \rf{al}.

Now, assume that $\alpha\in\left[\frac13,\frac23\right]$ then function $F_9$ is increasing and, consequently,
$$F_9(t)\geq F_6(t),t\in[0,1-\alpha],\;\;\textrm{and}\;\;F_9(t)\leq F_6(t),t\in(1-\alpha,1].$$
Thus  for every convex function $f$ we have
$$\int_0^1fdF_6\leq\int_0^1fdF_9$$
which yields \rf{al1}.

 Let now $\alpha<\frac13$
Then function $F_9$ is decreasing on some interval $[0,d]$ and increasing on $[d,1].$ Observe that from
the equality, 
$$\int_0^1F_9(t)dt=\int_0^1F_6(t)dt$$
we know that functions $F_9,F_6$ must have a crossing point in the interval $(0,1-\alpha),$
further these functions cross also at the point $1-\alpha.$ Thus there are two crossing points of 
$F_9,F_6$ in view of Lemma 2 from \cite{OSz} this means that expressions 
$$\int_0^1fdF_6,\int_0^1fdF_9$$
are incomparable in the class of convex functions (as claimed). The reasoning
in the case $\alpha>\frac23$ is similar.

Now we shall prove the inequality \rf{S1S2}. If $\alpha\in\left(0,\frac13\right]\cup[\frac23,1)$ then
$F_9'(0)\leq\frac{\alpha}{1-\alpha}$ and $F_9'(1)\leq\frac{1-\alpha}{\alpha}$ 
this means that functions $F_9$ and $F_8$ have only one crossing point and, therefore we have 
\rf{S1S2}.

If on the other hand $\alpha\in(\frac13,\frac12)$ then $F_9'(0)<\frac{\alpha}{1-\alpha}$ and $F_9'(1)>\frac{1-\alpha}{\alpha}$ 
and, consequently functions $F_9,F_8$ have two crossing points. Similarly as before from Lemma 2, \cite{OSz} we know 
that for $\alpha\in\left(\frac13,\frac12\right),$ $S^1_\alpha f(x,y)$ and $S^2_\alpha f(x,y)$ are incomparable in 
the class of convex functions, as claimed. It is easy to see that in the case $\alpha\in\left(\frac12,\frac23\right)$
functions $F_9,F_8$ have again two crossing points which finishes the proof.
\end{proof}

\section{Concluding remarks and examples}

In the previous sections we made an exhaustive study of two types of inequalities. Now we briefly describe the possible extensions of our results.

\begin{remark}
In order to obtain inequalities involving expressions of the form
$\frac{a_1\Phi(x)+a_2\Phi(\alpha x+(1-\alpha)y)+a_3\Phi(y)}{(y-x)^2}$  
 functions of the form
$$F_1(t):=\left\{\begin{array}{ll}
ax^2+(1-\alpha)x&t\in[0,\alpha)\\
cx^2+(1-c\alpha-c)x+c\alpha&t\in[\alpha,1]
\end{array}\right.
$$
where $c=\left(-\frac{\alpha}{1-\alpha}\right)^3$ must be used. Since the description of all 
possible cases in Theorem \ref{mainth} was already quite complicated, we shall not present 
these inequalities in details here.
\end{remark}

\begin{remark}
It is possible to use methods developed in this paper to get inequalities involving longer 
expressions of the form \rf{df}. In order to do that it is necessary to use more than two quadratic functions. For example
considering function 
\begin{equation}
F_1(t):=\left\{\begin{array}{ll}
4t^2&t\leq\frac14,\\
-4t^2+4t-\frac12&t\in\left(\frac14,\frac12\right]\\
4t^2-4t+\frac{3}{2}&t\in\left(\frac12,\frac34\right]\\
-4t^2+8t-3&t>\frac34
\end{array}\right.
\end{equation}
and using Levin-Stechkin theorem, we get the following inequality
\begin{multline*}
\frac{8\Phi(x)-16\Phi\left(\frac{3x+y}{4}\right)+16\Phi\left(\frac{x+y}{2}\right)
-16\Phi\left(\frac{x+3y}{4}\right)+8\Phi(y)}{(y-x)^2}\\
\leq\frac{1}{y-x}\int_x^yf(t)dt
\end{multline*}
where $f:[x,y]\to\R$ is any convex function and $\Phi''=f.$
\end{remark}

\begin{remark}
We have $$\int_0^1 t^2dF_9(t)=\frac56-\alpha$$ and $$\int_0^1t^2dF_6(t)=(1-\alpha)^2.$$
This means that for two values of $\alpha:$ $\frac{3-\sqrt{3}}{6}$ and $\frac{3+\sqrt{3}}{6}$
we have $\int_0^1 t^2dF_9(t)=\int_0^1 t^2dF_6(t)$ Moreover, as it was mentioned in the proof of 
Theorem \ref{NS}, functions $F_9,F_6$ have in this case two crossing points. This means that, from \cite{DLS} (see also \cite{Rajba})
we get that the inequalities:
$$S^2_{\frac{3-\sqrt{3}}{6}}(x,y)\geq f\left(\frac{3-\sqrt{3}}{6}x+\frac{3+\sqrt{3}}{6}y\right)$$
and
$$S^2_{\frac{3+\sqrt{3}}{6}}(x,y)\leq f\left(\frac{3+\sqrt{3}}{6}x+\frac{3-\sqrt{3}}{6}y\right)$$
are satisfied by all $2-$convex functions $f:[x,y]\to\R$
\end{remark}

\begin{remark} It is easy to see that all inequalities obtained in this paper in fact characterize 
convex functions (or $2-$convex functions). This is a consequence of results contained in paper \cite{BesPal}.  
\end{remark}

\end{document}